\documentclass[11pt, oneside]{amsart} 

\usepackage[english]{babel} 
\usepackage{graphics,color,pgf}
\usepackage{epsfig}
\usepackage[ansinew]{inputenc}
\usepackage[all]{xy}
\usepackage{tikz-cd}
\usepackage{bbm}
\usetikzlibrary{matrix,arrows,decorations.pathmorphing}
\newdir{ >}{!/8pt/@{}*@{>}}
\usepackage{amssymb, amsmath,amsthm, mathtools, amscd}
\usepackage{mathrsfs}
\usepackage[margin=1.5in]{geometry}
\usepackage{tikz}
\usetikzlibrary{automata, arrows.meta, positioning}
\usepackage[pagebackref, pdfborder={0 0 0}
  ,urlcolor=black,hypertexnames=false]{hyperref}

\usepackage[margin=1.5in]{geometry}
  
\makeatletter
\newtheorem*{rep@theorem}{\rep@title}
\newcommand{\newreptheorem}[2]{%
\newenvironment{rep#1}[1]{%
 \def\rep@title{#2 \ref{##1}}%
 \begin{rep@theorem}}%
 {\end{rep@theorem}}}
\makeatother

\newtheorem{intro_thm}{Theorem}

\newtheorem{lemma}{Lemma}[section]
 
\newtheorem{prop}[lemma]{Proposition}
\newtheorem{cor}[lemma]{Corollary}

\theoremstyle{definition}
\newtheorem{defn}[lemma]{Definition}

\theoremstyle{remark}
\newtheorem{oss}[lemma]{Remark}
\newtheorem{nota}[lemma]{Notation}

\newtheoremstyle{TheoremNum}
        {0.2 cm}{0.2 cm}              
        {\itshape}                      
        {}                              
        {}                     
        {.}                             
        { }                             
        {\thmname{\bfseries #1}\thmnote{ \bfseries #3}}
    \theoremstyle{TheoremNum}

\newtheorem{claim*}[rec_thm]{Claim}

\newcommand{\id}{\mathrm{id}}

\begin{document}

\title[Superrigidity for transverse measured groupoids]{Superrigidity for representations of transverse measured groupoids}

\author[F. Sarti]{F. Sarti}
\address{Department of Mathematics, University of Pisa, 
Pisa, Italy}
\email{filippo.sarti@unipi.it}

\author[A. Savini]{A. Savini}
\address{Department of Mathematics, University of Milano-Bicocca, Milan, Italy}
\email{alessio.savini@unimib.it}

\date{\today.\ \copyright{\ F. Sarti, A. Savini}.}

\begin{abstract}
For $i=1,\ldots,k$, let $\mathbf{G}_i$ be a connected, simply connected, semisimple algebraic group over some local field $\kappa_i$ of characteristic zero. 
Let $G_i=\mathbf{G}_i(\kappa_i)$ be the $\kappa_i$-points of $\mathbf{G}_i$ and denote by $G=\prod_{i=1}^k G_i$. If we assume that $G$ has higher rank and each factor has positive rank, given an ergodic transverse $G$-system $(X,\mu,Y)$, we prove a superrigidity phenomenon for Zariski dense representations of the transverse groupoid $(G \ltimes X)|_Y$ into either an almost simple or a reductive algebraic group.
\end{abstract}
\maketitle

\section{Introduction}

A subgroup $\Gamma$ in a semisimple Lie group $G$ with finite center and no compact factors is a \emph{lattice} if it is discrete and it has finite covolume with respect to the Haar measure. In \cite{margulis:super} Margulis proved that every unbounded Zariski dense representation of a higher rank irreducible lattice $\Gamma$ into an adjoint semisimple Lie group can be actually extended to the ambient group $G$. This phenomenon, called \emph{superrigidity}, was crucial for Margulis to prove his \emph{arithmeticity} statement in the higher rank setting. 

Later on, Zimmer \cite{zimmer:annals} showed an analogous result for Zariski dense \emph{measurable cocycles} associated to $\Gamma$. Roughly speaking, a measurable cocycle is a twisted representation where the twist depends on some Lebesgue $\Gamma$-space $(X,\mu)$. Zimmer proved that every unbounded Zariski dense measurable cocycle is \emph{superrigid}, namely it can be untwisted to an actual representation of $\Gamma$ (or even of $G$). In his book \cite[Chapter 5]{zimmer:libro}, he pointed out that one can recover Margulis superrigidity from his theorem by fixing as parameter space $X=G/\Gamma$. More precisely, given a measurable section $\theta:G/\Gamma \rightarrow G$ of the canonical projection, one can define a measurable cocycle by setting
$$
\sigma_\theta:G \times G/\Gamma \rightarrow \Gamma, \ \ \sigma_\theta(g,x):=\theta(gx)g\theta(x)^{-1}. 
$$
Additionally, different choices of sections produce cohomologous cocycles. Any unbounded Zariski dense representation $\rho:\Gamma \rightarrow H$ into an adjoint semisimple Lie group can be composed with $\sigma_\theta$, leading in this way to the Zariski dense measurable cocycle $\rho \circ \sigma_\theta$ on which Zimmer superrigidity applies. 

\subsection{Transverse measured groupoids} The above picture can be translated into the language of measured groupoids. A \emph{groupoid} is a small category in which every morphism is invertible. A groupoid $\mathcal{G}$ is said to be \emph{measured} when it is equipped with a measure on the set of morphisms that is compatible with both the composition and the inverse map (see Section \ref{secgroupoids} for further details).
A standard example of measured groupoid is indeed the \emph{action groupoid} (or \emph{semidirect groupoid}) $G \ltimes G/\Gamma$ when $\Gamma<G$ is a lattice, endowed with the product measure. 

In this article we are interested in action groupoids and their \emph{transverse groupoids}. Specifically, let $G$ be a locally compact, second countable, unimodular group acting on a standard probability space $(X,\mu)$ via a probability measure-preserving (pmp) action. A \emph{cross section} for this action is a Borel subset $Y \subset X$ such that the hitting time set 
$$
Y_x:=\{g \in G \ | \ gx \in Y \} 
$$
is not empty and locally finite for every $x \in X$. In this setting, the restricted groupoid $\mathcal{G}:=(G \ltimes X)|_Y$ has countable fibers with respect to the \emph{target map} and there exists a suitable probability measure $\nu$ \cite{ABC} such that $(\mathcal{G},\nu)$ is a measured groupoid (see Proposition \ref{Campbell}). If we assume that the measure $\nu$ is ergodic, we call the triple $(X,\mu,Y)$ \emph{ergodic integrable $G$-system} (see Section \ref{section:transverse}) and we use $(\mathcal{G},\nu)$ to refer to the associated \emph{transverse groupoid}. 
For instance, if $\Gamma<G$ is a lattice, a cross section for the action groupoid $G\ltimes \Gamma\backslash G$ is given by $\{e\Gamma\} \subset G/\Gamma$ and the lattice $\Gamma$ is the associated \emph{transverse groupoid}. 
This suggests that transverse measured groupoids can be viewed as a natural generalization of lattices. Indeed, as suggested by Hartnick (personal communication), the entire theory of \emph{approximate lattices} \cite{BH} can be rephrased within the framework of measured groupoids.

\subsection{Main results} Inspired by Zimmer's observation \cite{zimmer:libro} about the possibility of deriving Margulis superrigidity from his statement about measurable cocycles and following the connection between lattices and transverse groupoids showcased by the theory of approximate lattices, in these notes we study several superrigidity phenomena occurring for \emph{Zariski dense} representations of transverse measured groupoids.
It is important to emphasize that the notion of Zariski density is well-defined in this context thanks to the framework of algebraic representability for ergodic groupoids recently introduced by the authors \cite{sarti:savini:boundaries} (refer to Section \ref{sec:alg:rep} for a formal definition).

Before stating the main results, we need to fix some notation. Let $S=\{p_1,\ldots,p_\ell\}$ be a finite set of integer primes, where we allow one of them to be $\infty$. For each $p_i \in S$, we denote by $\kappa_i=\mathbb{Q}_{p_i}$, with the usual convention that $\mathbb{Q}_\infty=\mathbb{R}$. Let $\mathbf{G}_i$ be a connected, simply connected, semisimple algebraic $\kappa_i$-group.
We denote by $G_i=\mathbf{G}_i(\kappa_i)$ the $\kappa_i$-points of $\mathbf{G}_i$. We set
$$
r_i:=\mathrm{rank}_{\kappa_i}(\mathbf{G}_i), \ \ \  r:=\sum_{i=1}^\ell r_i.
$$
For $G=\prod_{i=1}^\ell G_i$, we fix an ergodic integrable $G$-system $(X,\mu,Y)$ and we denote by $\mathcal{G}$ the transverse groupoid determined by it. A key concept in this framework is that one of a \emph{representation} (or \emph{morphism}) from $\mathcal{G}$ into a locally compact group. Informally, this corresponds to an (almost everywhere) algebraic groupoid morphism, supplemented by a compatibility condition on the measures of the sets of objects. We refer to Section \ref{secgroupoids} for a precise discussion. 
We have the following pair of superrigidity results for morphisms of transverse groupoids in higher rank into simple algebraic groups. 

\begin{intro_thm}\label{thm:superrigidity}
Suppose that $r_i>0$, for $i=1,\ldots, \ell$, and $r \geq 2$. Let $\kappa$ be equal either to $\mathbb{R},\mathbb{C}$ or $\mathbb{Q}_p$ and consider $H=\mathbf{H}(\kappa)$ the $\kappa$-points of an almost $\kappa$-simple $\kappa$-algebraic group. Consider a Zariski dense representation $\rho:\mathcal{G} \rightarrow H$. Then one of the following holds:
\begin{enumerate}
\item The morphism $\rho$ is similar to a homomorphism with values into a compact subgroup of $H$. 
\item If $\kappa=\mathbb{Q}_{p_i}$ for some $p_i \in S \setminus \{\infty\}$, then $\rho$ is similar to a representation induced by a morphism of the form 
$$
G \rightarrow G_i \rightarrow H,
$$
where the first map is the projection on the $i$-th factor and the second one is a rational epimorphism.
\item If $\kappa=\mathbb{R},\mathbb{C}$, then $\rho$ is similar to a representation induced by a morphism 
$$
G \rightarrow G_\infty \rightarrow H,
$$
where the first map is the projection on the real factor and the second one is a rational epimorphism.
\end{enumerate}
\end{intro_thm}

\begin{intro_thm}\label{thm:margulis:fisher}
Assume that $r_i \geq 2$, for $i=1,\ldots,\ell$. Given a local field of characteristic zero $\kappa$, let $\mathbf{H}$ be a reductive $\kappa$-algebraic group with $H=\mathbf{H}(\kappa)$. Every Zariski dense morphism $\rho:\mathcal{G} \rightarrow H$ is similar to a representation which is induced by a globally defined morphism $\pi:G \rightarrow H$, modulo a morphism $\sigma:\mathcal{G} \rightarrow K \subset H$, where $K$ is a compact subgroup centralizing $\pi(G)$. 
\end{intro_thm}

To obtain the above statements, we show that a \emph{similarity} between two measured groupoids induces a bijection on their Zariski dense representations (when we fix the codomain). The fact that $G \ltimes X$ and the restricted groupoid $\mathcal{G}$ are similar (Lemma \ref{lemma:sim}) and the standard theorems for the semidirect groupoid $G \ltimes X$ lead to the desired conclusion. 

We use the same strategy to prove also the following statement about transverse groupoids with property (T). 

\begin{intro_thm}\label{thm:property:T}
Let $G$ be a locally compact second countable group with Kazhdan property $\mathrm{(T)}$. Let $(X,\mu,Y)$ be an ergodic integrable $G$-system. Let $\rho:\mathcal{G} \rightarrow H$ be a measurable representation into an amenable group. Then $\rho$ is similar to a representation into a compact subgroup of $H$. 
\end{intro_thm}

As anticipated, our motivation for studying superrigidity in the context of transverse groupoids stems from their deep connection with the theory of \emph{approximate lattices} \cite{BH}. Given an approximate lattice $\Lambda$ in a locally compact, second countable, unimodular group $G$, one can naturally associate to $\Lambda$ a transverse groupoid known as \emph{pattern groupoid} (see \cite{BHK} for further details); indeed, this construction extends to the more general setting of \emph{FLC subsets}. We say that two approximate lattices are \emph{combinatorially isomorphic} if their associated pattern groupoids are \emph{isomorphic}. Within this framework, Theorem \ref{thm:superrigidity} can be rephrased by stating that superrigidity is a combinatorial property of higher-rank approximate lattices. The relevance of superrigidity for pattern groupoids is further highlighted by recent results obtained by Machado \cite{Machado}.

%

\subsection*{Plan of the paper} In Section \ref{first:sec} we recall several notions that we will need in the paper. In Section \ref{secgroupoids} we remind the definition of measured groupoids, the theory about morphisms and similarities and the definition of integrable systems. In Section \ref{sec:alg:rep} we recall the notion of algebraic representability for ergodic groupoids. Section \ref{zariski:dense:rep} is devoted to the study of Zariski dense representations of transverse groupoids. In that section we prove our main results.

\section{Measured groupoids}\label{first:sec}

In this section we quickly recall all the preliminary definitions needed throughout the paper. 

\subsection{Measured groupoids and morphisms}\label{secgroupoids} A \emph{groupoid} $\mathcal{G}$ is a small category where all the morphisms are invertible. We denote the set of objects by $\mathcal{G}^{(0)}$ and the set of morphisms by $\mathcal{G}^{(1)}$. We refer to the natural \emph{source} and \emph{target} maps using the notation $s:\mathcal{G}^{(1)} \rightarrow \mathcal{G}^{(0)}$ and $t:\mathcal{G}^{(1)} \rightarrow \mathcal{G}^{(0)}$, respectively. Given two objects $x,y \in \mathcal{G}^{(0)}$, we set
$$
\mathcal{G}_x:=\{ g \in \mathcal{G}^{(1)} \ | \ s(g)=x\}, \ \ \ \mathcal{G}^y:=\{ g \in \mathcal{G}^{(1)} \ | \ t(g)=y\}. 
$$
The natural embedding of $\mathcal{G}^{(0)}$ into $\mathcal{G}^{(1)}$ via the map $x \rightarrow 1_x$ allows us to write $s(g)=g^{-1}g$ and $t(g)=gg^{-1}$. A subset $U \subset \mathcal{G}^{(0)}$ is \emph{invariant} if, for every $g \in \mathcal{G}^{(1)}$ we have that $s(g) \in U$ if and only if $t(g) \in U$. Given any subset $U \subset \mathcal{G}^{(0)}$ of objects, we define the \emph{restricted groupoid} $\mathcal{G}|_U$ as the groupoid with objects $U$ and morphisms
$$
\mathcal{G}|_U^{(1)}:=\{ g \in \mathcal{G}^{(1)} \ | \ s(g),t(g) \in U\}. 
$$
The \emph{saturation} of $U$ is the subset
$$
\mathcal{G}U:=\{t(g) \ | \ s(g) \in U\}. 
$$
A groupoid $\mathcal{G}$ is \emph{Borel} if both $\mathcal{G}^{(1)}$ and $\mathcal{G}^{(0)}$ are standard Borel spaces and both the composition and the inverse map are Borel. 
A \emph{Borel system} of measures for the target map $t:\mathcal{G}^{(1)} \rightarrow \mathcal{G}^{(0)}$ is a family $\lambda=\{\lambda^x\}_{x \in \mathcal{G}^{(0)}}$ of measures on $\mathcal{G}^{(1)}$ such that $\lambda^x(\mathcal{G}^{(1)} \setminus \mathcal{G}^x)=0$ and the map
$$
x \mapsto \lambda^x(f)
$$
is Borel, for every bounded Borel function $f:\mathcal{G}^{(1)} \rightarrow \mathbb{R}$. We call $\lambda$ a \emph{Haar system} if it is a Borel system for the target map and it satisfies
$$
g_\ast \lambda^{s(g)}=\lambda^{t(g)}, 
$$
for every $g \in \mathcal{G}^{(1)}$. A Borel groupoid $\mathcal{G}$ is $t$-\emph{discrete} if the fibers of the target map are countable. In that case, a natural Haar system is given by the counting measures on $t$-fibers (and hence we will omit it in the notation). 

Given a Haar system $\lambda$ for $\mathcal{G}^{(1)}$, any probability measure $\nu$ on $\mathcal{G}^{(0)}$ defines a measure on $\mathcal{G}^{(1)}$ as follows: given $E \subset \mathcal{G}^{(1)}$, we define
$$
(\nu \circ \lambda)(E):=\int_{\mathcal{G}^{(0)}}\int_{\mathcal{G}^{(1)}} \lambda^x(E) d\nu(x). 
$$
We say that $\nu$ is \emph{invariant} (respectively \emph{quasi-invariant}) if the measure $\nu \circ \lambda$ (respectively, its measure class) is invariant under the inverse map $g \mapsto g^{-1}$. Given a quasi-invariant probability measure $\nu$ on $\mathcal{G}^{(0)}$, we call the triple $(\mathcal{G},\lambda,\nu)$ a \emph{measured groupoid}. We say that the groupoid is \emph{ergodic} if for any invariant subset $U \subset \mathcal{G}^{(0)}$ it holds either $\nu(U)=0$ or $\nu(\mathcal{G}^{(0)} \setminus U)=0$. 

For a measured groupoid $(\mathcal{G},\lambda,\nu)$, given a conull subset $U \subset \mathcal{G}^{(0)}$, we call \emph{inessential contraction} of $\mathcal{G}$ to $U$ the restricted measured groupoid $\mathcal{G}|_U$ endowed with the restriction of the measure $\nu \circ \lambda$. 


\begin{defn}\label{def:negligible}
 A subset $U\subset \mathcal{G}^{(0)}$ is \emph{negligible} if $\nu(\mathcal{G}U)=0$, namely its saturation is a null set.  
\end{defn}

An algebraic homomorphism from $\mathcal{G}$ to $\mathcal{H}$ is a functor $\varphi:\mathcal{G}\to \mathcal{H}$. Thus, it consists of a pair of functions, one between objects and one between morphisms. We denote by $\varphi_1:\mathcal{G}^{(1)}\to \mathcal{H}^{(1)}$ the map at the level of morphisms and by $\varphi_0:\mathcal{G}^{(0)}\to \mathcal{H}^{(0)}$ the map at the level of objects. Since $\mathcal{G}^{(0)} \subset \mathcal{G}^{(1)}$, an algebraic morphism $\varphi:\mathcal{G} \rightarrow \mathcal{H}$ is completely determined by $\varphi_1$. Thus, we can identify $\varphi$ with the map $\varphi_1$.

\begin{defn}\label{def:homo}
Let $(\mathcal{G},\lambda,\nu)$ and $(\mathcal{H},\lambda',\nu')$ be two measured groupoids. An algebraic morphism $\varphi:\mathcal{G} \rightarrow \mathcal{H}$ is a \emph{strict homomorphism} if both $\varphi_1$ and $\varphi_0$ are Borel and it holds that $\nu(\varphi_0^{-1}(U))=0$ for every negligible $U\subset \mathcal{H}^{(0)}$. 

  A Borel map $\varphi_1:\mathcal{G}^{(1)} \to \mathcal{H}^{(1)}$ is a \emph{homomorphism} if the restriction of $\varphi_1$ to some inessential contraction $\mathcal{G}|_V^{(1)}$ is a strict homomorphism. 

  A \emph{strict similarity} between strict homomorphisms $\varphi,\psi: \mathcal{G}\to \mathcal{H}$ is a Borel map $h: \mathcal{G}^{(0)}\to \mathcal{H}$ such that $$\psi(g)=h(t(g))\varphi(g)h(s(g))^{-1}$$
  makes sense and it holds for every $g\in \mathcal{G}$. In this case we say that $\varphi$ and $\psi$ are \emph{strictly similar}, writing $\varphi\approx \psi$. 

  Two homomorphisms $\varphi_1,\psi_1: \mathcal{G}^{(1)} \to \mathcal{H}^{(1)}$ are \emph{similar} if the restrictions of both $\varphi_1$ and $\psi_1$ to some inessential contraction $\mathcal{G}|_V^{(1)}$ are strictly similar. In this case we write $\varphi_1 \sim \psi_1$. 

  Two measured groupoids $(\mathcal{G},\lambda,\nu)$ and $(\mathcal{H},\lambda',\nu')$ are \emph{similar} if there exist homomorphisms
  $\varphi_1: \mathcal{G}^{(1)} \rightarrow \mathcal{H}^{(1)}$ and $\psi_1: \mathcal{H}^{(1)} \rightarrow \mathcal{G}^{(1)}$ such that $\psi_1 \circ \varphi_1 \sim \id_{\mathcal{G}^{(1)}}$ and $\varphi_1\circ \psi_1\sim \id_{\mathcal{H}^{(1)}}$.
\end{defn}

\begin{nota}
We will refer to a morphism $\varphi_1:\mathcal{G}^{(1)} \rightarrow \mathcal{H}^{(1)}$ via the abuse of notation $\varphi:\mathcal{G} \rightarrow \mathcal{H}$ and we will say that $\varphi$ \emph{is determined by} $\varphi_1$. 
\end{nota}

\begin{oss}
Let $(\mathcal{G},\lambda,\nu)$ be a measured groupoid and let $(H,\tau)$ be a locally compact group with its Haar measure. If we consider a (strict) morphism of measured groupoids $\rho:\mathcal{G} \rightarrow H$, then the only negligible set of the units of $H$ is the empty set, thus the condition of regularity on units introduced in Definition \ref{def:homo} is actually empty. As a consequence any measurable (strict) morphism $\rho:\mathcal{G} \rightarrow H$ is a (strict) morphism of measured groupoids in the sense of Definition \ref{def:homo}.  
\end{oss}

\begin{oss}
  The definitions of morphisms and similarities adopted here first appeared in the work of Ramsay \cite{ramsay}. We caution the reader about the lack of uniformity regarding these notions in the literature. For instance, in a recent work by the authors \cite{sarti:savini:23}, morphisms and similarities were defined differently. For the purposes of this note, we prefer to adopt Ramsay's point of view, since we think it is the correct one.
\end{oss}

\subsection{Transverse systems}\label{section:transverse}
Let $G$ be a locally compact second countable unimodular group. Consider an \emph{ergodic integrable transverse $G$-system} $(X,\mu,Y)$, namely an ergodic  probability measure-preserving action $G\curvearrowright (X,\mu)$ and a Borel cross section $Y\subset X$ such that 
$$Y_x\coloneqq\{g\in G\,,\, gx\in Y\}$$ does not accumulate to the identity, for every $x\in X$.
There is a canonical way to define a measure $\nu$ on $Y$.
\begin{prop}[{Refined Campbell theorem, \cite[Proposition\ 4.2]{ABC}}]\label{Campbell} 
Let $m_G$ the Haar measure on $G$. There exists a unique $\sigma$-finite measure $\nu$ on $Y$ such that for every Borel function $w: G \to [0, \infty]$ with $m_G(w) = 1$ and every Borel function $f: Y \to [0, \infty]$ we have
\begin{equation}
\int_Y f(y) \, d \nu(y) = \int_X \sum_{g \in Y_x} f(g x) w(g) \, \, d \mu(x).
\end{equation}
Moreover, for every Borel function $F: G \times Y \to [0, \infty]$ we have the Campbell formula
\begin{equation}\label{equation:formula:integral}
\int_G \int_Y F(g,y) \, dm_G(g)d\nu(y) = \int_X \sum_{g \in Y_x} F(g^{-1}, g x)\, d\mu(x).
\end{equation}
\end{prop}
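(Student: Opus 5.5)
We will prove the refined Campbell theorem in two stages: first build $\nu$ from a single fixed weight $w$, and then show that the resulting measure is independent of $w$ and satisfies the Campbell formula \eqref{equation:formula:integral}. Uniqueness is immediate once independence is known, because the first formula determines $\nu(f)$ for every nonnegative Borel $f$.

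\textbf{Construction.} Fix a nonnegative Borel $w$ with $m_G(w)=1$ and put
$$\nu_w(B):=\int_X \sum_{g\in Y_x}\mathbbm{1}_B(gx)\,w(g)\,d\mu(x)$$
for Borel $B\subset Y$. The integrand is Borel in $x$: since $Y_x$ is locally finite, the set $\{(x,g):gx\in Y\}$ has countable sections and a Lusin--Novikov type enumeration applies. Countable additivity then follows from monotone convergence.

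For $\sigma$-finiteness, choose $w=\mathbbm{1}_U/m_G(U)$ with $U$ a small identity neighbourhood. The uniform discreteness coming from the no-accumulation hypothesis bounds the number of $g\in Y_x\cap U$ by a constant, at least after covering $Y$ by countably many pieces $Y_n$ on which the bound holds. Hence each $\nu_w(Y_n)$ is finite.

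\textbf{Key identity.} The heart of the argument is a mass transport principle. For Borel $F\ge 0$ on $G\times Y$ we compute
$$I(F):=\int_X\sum_{g\in Y_x}F(g^{-1},gx)\,d\mu(x).$$
Insert $1=\int_G w(h)\,dm_G(h)$ and substitute $x=h^{-1}x'$, which is allowed because the action preserves $\mu$. Use $Y_{h^{-1}x'}=Y_{x'}h$, and reparametrize $g=kh$ with $k\in Y_{x'}$. Then
$$I(F)=\int_X\sum_{k\in Y_{x'}}\int_G F(h^{-1}k^{-1},kx')\,w(h)\,dm_G(h)\,d\mu(x').$$
Now change variables $h\mapsto k^{-1}h$, which uses unimodularity and left invariance of $m_G$, and apply Tonelli. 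This gives
$$I(F)=\int_X\sum_{k\in Y_{x'}}\Big(\int_G F(h^{-1},kx')\,w(k^{-1}h)\,dm_G(h)\Big)d\mu(x').$$
We still need to reorganize this as $\int_G\int_Y F(g,y)\,d\nu_w(y)\,dm_G(g)$, and I expect this bookkeeping to be the main obstacle.

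The cleanest route is to start from the right-hand side of the defining formula applied to $y\mapsto\int_G F(g,y)\,dm_G(g)$, and to perform the same substitution in reverse. Write $\int_X\sum_{k\in Y_x}\int_G F(g,kx)\,w(k)\,dm_G(g)\,d\mu(x)$. Then insert an auxiliary $\int_G w'(h)\,dm_G(h)=1$ and move along the orbit by $h$, using invariance of $\mu$ and of $m_G$. The point is to check that the transported expression becomes symmetric in the roles of $w$ and the $g$-variable.

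\textbf{Independence and conclusion.} Applying the identity with $F(g,y)=w'(g)f(y)$ shows that $\nu_w(f)=I(F)$ does not depend on $w$. To see this, compare the weights $w$ and $w'$ through the symmetric expression $\int_X\sum_{g\in Y_x}f(gx)\,w'(g)\,d\mu(x)$, which is obtained from $I$ by the inversion $g\mapsto g^{-1}$; this inversion is again measure preserving by unimodularity. So all the $\nu_w$ coincide; call the common measure $\nu$. The Campbell formula for general $F$ then follows by approximating $F$ with sums of product functions $\mathbbm{1}_A(g)\mathbbm{1}_B(y)$ and applying monotone class arguments.
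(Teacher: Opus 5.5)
The paper does not prove this statement; it quotes it from \cite{ABC}. Judged on its own terms, your proposal has a genuine gap at exactly the step you call bookkeeping.

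\textbf{What orbit moves give.} Moving along orbits (invariance of $\mu$ together with $Y_{hx}=Y_xh^{-1}$) shows only one thing. For fixed $f\ge 0$, the quantity $\lambda_f(w):=\int_X\sum_{g\in Y_x}f(gx)\,w(g)\,d\mu(x)$ does not change when $w$ is replaced by a right translate $w(\cdot\,h)$. Your displayed computation of $I(F)$ is correct, but it merely inserts and removes an $h$-average of a quantity that does not depend on $h$. It never replaces $w$ by an unrelated weight $w'$.

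\textbf{Circularity.} The key identity $I(F)=(m_G\otimes\nu_w)(F)$ for a single fixed $w$ already implies what you want to deduce from it. Tested on $F=w'\otimes f$, it reads $\nu_{\check w'}(f)=\nu_w(f)$ for every normalized $w'$, which is exactly independence. So your independence paragraph assumes its conclusion.

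\textbf{The inversion step is not justified.} $\int_X\sum_{g\in Y_x}f(gx)w(g^{-1})\,d\mu$ and $\int_X\sum_{g\in Y_x}f(gx)w(g)\,d\mu$ are sums over the locally finite sets $Y_x$. These sets are not inversion-symmetric, so inversion invariance of $m_G$ says nothing directly about them. Also, a continuous-group ``mass transport principle'' is normally derived from the Campbell formula itself, so invoking it here is circular.

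\textbf{The missing ingredient is uniqueness of Haar measure.}
\begin{enumerate}
\item Regard $A\mapsto\lambda_f(A):=\int_X\sum_{g\in Y_x\cap A}f(gx)\,d\mu(x)$ as a Borel measure on $G$. The orbit computation above shows it is right-invariant.
\item Take $f$ bounded and supported on a piece $Y_n=\{y\in Y:\ Y_y\cap V_n=\{e\}\}$, where $(V_n)$ is a basis of identity neighbourhoods. Then the sets $\{g\in Y_x:\ gx\in Y_n\}$ are uniformly $V_n$-separated. Hence $\lambda_f(K)\le C(K,n)$ for compact $K$, using $\mu(X)=1$. This is where your separation argument belongs.
\item Since $G$ is unimodular, uniqueness of Haar measure gives $\lambda_f=c_f\,m_G$. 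Set $\nu(f):=c_f$ and extend to all $f\ge0$ by monotone convergence. The first formula then holds for every $w$ simultaneously, which gives existence, independence and (with the $Y_n$) $\sigma$-finiteness at once.
\item Only at this point does unimodularity legitimately handle the inversion. On products, $I(a\otimes f)=\lambda_f(\check a)=\nu(f)\,m_G(\check a)=m_G(a)\,\nu(f)$. A monotone class argument over the finite-measure exhaustion $K_m\times Y_n$ then yields the Campbell formula for general $F$.
\end{enumerate}
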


Since $Y_x$ does not accumulate to $e$, the restriction groupoid $\mathcal{G}\coloneqq G\ltimes X|_Y$ has discrete $t$-fibers.
We equip $\mathcal{G}$ with the transverse measure $\nu$ and the Haar system $\lambda$ given by the counting measures on the fibers and we call it the \emph{transverse measured groupoid} associated to the system $(X,\mu,Y)$.
Denote by $a: G\times Y\to X$ the multiplication map $a(g,y)=gy$ and choose a Borel map $\beta:X\to G$ such that $$x\mapsto (\beta(x)^{-1},\beta(x)x)$$
is a Borel section of $a$. The existence of such $\beta$ is ensured by the fact that $a$ is countable-to-one \cite{BHK}.
Define the measurable cocycle
$$\sigma:G\times X\to G\,,\quad \sigma(g,x)\coloneqq \beta(gx)g\beta(x)^{-1}\,.$$
Hence $\sigma(g,y)=g$ if and only if $(g,y)\in \mathcal{G}$.

%

From now on, given a transverse $G$-system $(X,\mu,Y)$, we consider $Y$ endowed with the measure $\nu$ of Proposition \ref{Campbell}. 
The following is the analogous of \cite[Proposition 6.17]{ramsay}. 
\begin{lemma}\label{lemma:sim}
  Let $G$ be a locally compact second countable unimodular group and $(X,\mu,Y)$ be an ergodic integrable transverse $G$-system. Let $\nu$ be the transverse measure on $Y$. Then $(G\ltimes X,m_G,\mu)$ and $(\mathcal{G},\nu)$ are similar ergodic measured groupoids. 
\end{lemma}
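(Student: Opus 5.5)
The plan is to exhibit the two homomorphisms explicitly: the inclusion $\iota:\mathcal{G}\to G\ltimes X$ and the retraction $\psi:G\ltimes X\to\mathcal{G}$ induced by the cocycle $\sigma$. Concretely, I would set $\iota(g,y)=(g,y)$ and
$$
\psi(g,x):=\bigl(\sigma(g,x),\beta(x)x\bigr)=\bigl(\beta(gx)g\beta(x)^{-1},\beta(x)x\bigr).
$$
Since $\beta(x)x\in Y$ and $\sigma(g,x)\beta(x)x=\beta(gx)gx\in Y$, this is a morphism of $\mathcal{G}$. Functoriality follows from the cocycle identity for $\sigma$. Both maps are Borel because $\beta$ is Borel.

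Next I would verify the two similarities. The composite $\psi\circ\iota$ sends $(g,y)$ to $(\beta(gy)g\beta(y)^{-1},\beta(y)y)$. The map $h:Y\to\mathcal{G}$, $h(y)=(\beta(y),y)$, is a morphism from $y$ to $\beta(y)y$, both lying in $Y$. It conjugates $\id$ to $\psi\circ\iota$ via $\psi\iota(g)=h(t(g))\,g\,h(s(g))^{-1}$, so $\psi\circ\iota\approx\id_{\mathcal{G}}$. (If one prefers, $\beta$ may be chosen with $\beta|_Y=e$ on a conull set, making this an equality.) For $\iota\circ\psi$ on $G\ltimes X$, the same formula with $h(x)=(\beta(x),x)\in G\ltimes X$ gives $\iota\psi(g,x)=h(gx)\,(g,x)\,h(x)^{-1}$, a strict similarity to $\id_{G\ltimes X}$. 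Ergodicity of $G\ltimes X$ is the ergodicity of the action. Ergodicity of $\mathcal{G}$ follows because an invariant set $U\subset Y$ has $G$-saturation $GU$ that is $G$-invariant. The formula of Proposition \ref{Campbell} with $f=\mathbbm 1_U$ then shows $\nu(U)=0$ exactly when $\mu(GU)=0$, and likewise for complements. Alternatively, ergodicity is part of the hypothesis that $(X,\mu,Y)$ is an ergodic system.

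The main obstacle is the measure-theoretic condition in Definition \ref{def:homo}: $\iota$ and $\psi$ must pull negligible sets back to null sets, and I must also check the quasi-invariance of $\nu$. For $\psi_0(x)=\beta(x)x$, take $U\subset Y$ with $\nu(\mathcal{G}U)=0$. Its saturation $\mathcal{G}U=GU\cap Y$ is null. The Campbell formula \eqref{equation:formula:integral} with $F(g,y)=\mathbbm 1_{\mathcal{G}U}(y)$ gives $\mu(GU)=0$, since every $x\in GU$ has some $g\in Y_x$ with $gx\in\mathcal{G}U$. Hence $\mu(\psi_0^{-1}(U))\le\mu(GU)=0$.

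Conversely, for $\iota_0$ take $V\subset X$ with $\mu(GV)=0$. Then $GV\cap Y$ is $\nu$-null by the first formula of Proposition \ref{Campbell}, because the integrand vanishes off $GV$. Quasi-invariance of $\nu$ under inversion on $\mathcal{G}$ follows from the Campbell formula as in \cite{ABC}, so both $(G\ltimes X,m_G,\mu)$ and $(\mathcal{G},\nu)$ are measured groupoids. All similarities above are strict on the whole groupoid, so no inessential contraction is needed except possibly to discard the null set where $\beta$ fails to be Borel-controlled. This completes the plan, following \cite[Proposition 6.17]{ramsay}.
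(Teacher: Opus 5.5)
Your proposal is correct and follows the paper's proof: the same maps $\iota$ and $\psi(g,x)=(\sigma(g,x),\beta(x)x)$, the same similarity $h(x)=(\beta(x),x)$ for $\iota\circ\psi$, and the same negligible-set check via $\mathcal{G}U=GU\cap Y$ and the equivalence $\nu(GU\cap Y)=0\iff\mu(GU)=0$. The differences are minor. You derive that null-set equivalence and the ergodicity of $(\mathcal{G},\nu)$ directly from the Campbell formulas, where the paper cites \cite{BHK}; and you exhibit $\psi\circ\iota\approx\id_{\mathcal{G}}$ through $h(y)=(\beta(y),y)$ rather than asserting $\psi\circ\iota=\id_{\mathcal{G}}$, which avoids the paper's implicit normalization $\beta|_Y=e$.
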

\begin{proof}
  The ergodicity of $(\mathcal{G},\nu)$ follows by \cite[Lemma 4.9]{BHK}.
  Retain the notations above and
  consider the inclusion $\iota:\mathcal{G} \to G\times X$ and the map 
  $\psi: G\times X\to \mathcal{G}$ defined by 
  $$\psi (g,x)\coloneqq (\sigma(g,x),\beta(x)x)\,.$$
  We claim that they determine two homomorphisms. The fact that they are algebraic morphisms is an easy computation. To check the behaviour on negligible sets, we consider a Borel subset $U\subset X$. Then by \cite[Lemma 4.6]{BHK} we have
 \begin{equation}\label{eq:null:sets}
   \nu(GU\cap Y)=0 \iff \mu(GU)=0\,.
 \end{equation}
  Hence, if $U$ is negligible, then $\mu(GU)=0$ and $$\nu(\iota^{-1}(U))=\nu(U\cap Y)\leq \nu(GU\cap Y)=0\,.$$
  On the other hand, if $V\subset Y$ is negligible, then $\nu(\mathcal{G}V)=0$. Moreover, since $\mathcal{G}V=GV\cap Y$, by Equation \eqref{eq:null:sets} we have that $\mu(GV)=0$. Hence
  $$\mu(\psi_0^{-1}(V))\leq \mu(GV)=0\,.$$
  This finishes the proof of the claim. 
  We conclude by observing that $\psi\circ\iota=\id_{\mathcal{G}}$ and that $\iota\circ \psi$ is similar to $\id_{G\times X}$ via 
  $$h:X\to G\times X\,,\quad h(x)\coloneqq (\beta(x),x)\,.$$
\end{proof}

\begin{oss}
Retain the setting of Lemma \ref{lemma:sim}. In \cite{ramsay} the author defines a measure $\theta$ on $Y$ turning $\mathcal{G}$ into a measured groupoid such that, when $(G\ltimes X,\lambda,\mu)$ is ergodic, then so is $(\mathcal{G},\theta)$. With such a measure, Ramsay proves that $(G\ltimes X,\lambda,\mu)$ and $(\mathcal{G},\theta)$ are similar in the sense of Definition \ref{def:homo}. One could wonder what is the relation between Ramsay's measure $\theta$ and the measure $\nu$. We include here a brief discussion.

 Ramsay's quasi-invariant symmetric measure on $Y$ is constructed as follows: Consider a symmetric probability measure $\varrho$ on $G\times  X$ and consider the pushforward of $\varrho$ via the morphism
 $$\psi:G\times X\to \mathcal{G}\,,\quad (g,x)\mapsto (\sigma(g,x),\beta(x)x)\,.$$
If we define $\widetilde{\theta}$ to be the image of $\psi_*(\varrho)$ via the target map $t:\mathcal{G}^{(1)}\to Y$, then there exists $\theta\sim\widetilde{\theta}$ such that $(\mathcal{G},\theta)$ is in fact a measured groupoid. That is, $\theta$ is quasi-invariant with respect to $\mathcal{G}$ equipped with the Haar system determined by counting measures. 

 Although it should be clear that $\theta$ and $\nu$ do not coincide, we claim that $\theta$ and $\nu$ have the same negligible sets.
 To see this, let $U\subset Y$ such that $\nu(\mathcal{G}U)=0$. Then, since $\mathcal{G}U=GU\cap Y$, by \cite{BHK} we have $\mu(GU)=0$. Since $\mu$ and $t_\ast \varrho$ are equivalent, then $U$ is negligible for $\theta$. The converse is analogous. 

 As a consequence, $\varphi:\mathcal{G}\to \mathcal{H}$ is a (strict) homomorphism for $(\mathcal{G},\nu)$ if and only if it is a (strict) homomorphism for $(\mathcal{G},\theta)$.
\end{oss}

The similarity between $G\ltimes X$ and $\mathcal{G}$ is the ingredient that we needed to prove Theorem \ref{thm:property:T}.
\begin{proof}[Proof of Theorem \ref{thm:property:T}]
We know that $G \ltimes X$ and $\mathcal{G}$ are similar. Let $\psi:G \ltimes X \rightarrow \mathcal {G}$ and $\iota:\mathcal{G} \rightarrow G \ltimes X$ be the morphisms realizing the similarity. By composing with $\psi$ we obtain a measurable morphism
$$
\rho \circ \psi:G \ltimes X \rightarrow H,
$$
where $H$ is amenable. By \cite[Theorem 9.1.1]{zimmer:libro} we have that $\rho \circ \psi$ is similar to a morphism $\rho': G \ltimes X \rightarrow H$ whose image is contained into a compact group. Then $\rho' \circ \iota$ is a morphism similar to $\rho$, whose image is contained in a compact group. 
\end{proof}

\subsection{Algebraic representability of ergodic groupoids} \label{sec:alg:rep}

We briefly recall the definition of algebraic representations for ergodic groupoids introduced in \cite{sarti:savini:boundaries}.

Let $\kappa$ be a field with a non-trivial absolute value so that the induced topology is complete and separable. Let $(\mathcal{G},\lambda,\nu)$ be an ergodic measured groupoid. Consider a measurable homomorphism $\rho:\mathcal{G} \rightarrow H$ into the $\kappa$-points $H=\mathbf{H}(\kappa)$ of an algebraic $\kappa$-group $\mathbf{H}$. The \emph{essential image} of $\rho$ is the support of $\rho_*(\nu\circ\lambda)$ in $H$. An \emph{algebraic representation of $(\mathcal{G},\lambda,\nu)$ relative to $\rho$} is the datum of a pair $(\mathbf{V},\varphi)$, where $\mathbf{V}$ is an algebraic $\kappa$-variety with a $\kappa$-algebraic $\mathbf{H}$-action, and 
$$
\varphi:\mathcal{G}^{(0)} \rightarrow \mathbf{V}
$$
is a $\rho$-equivariant map. 

Given a measurable homomorphism $\rho:\mathcal{G} \rightarrow H$, we will refer to any algebraic representation of $(\mathcal{G},\lambda,\nu)$ relative to $\rho$ via the pair $(\mathbf{V},\varphi_{\mathbf{V}})$, where $\mathbf{V}$ is the codomain of $\varphi_{\mathbf{V}}$. A \emph{morphism} between two algebraic representations $(\mathbf{V},\varphi_{V})$ and $(\mathbf{U},\varphi_{U})$ is a $\mathbf{H}$-equivariant $\kappa$-algebraic map $\Psi:\mathbf{V} \rightarrow \mathbf{U}$ such that $\varphi_{\mathbf{U}}=\Psi \circ \varphi_{\mathbf{V}}$. An algebraic representation $(\mathbf{V},\varphi_{V})$ is of \emph{coset type} if $\mathbf{V}=\mathbf{H}/\mathbf{L}$, for some $\kappa$-algebraic subgroup $\mathbf{L}<\mathbf{H}$. By \cite[Theorem 5.5]{sarti:savini:boundaries} the category of algebraic representations of $(\mathcal{G},\lambda,\nu)$ relative to $\rho$ has an initial object of coset type. Let $(\mathbf{H}/\mathbf{L},\varphi)$ be such an initial object. We call the pair $(\mathbf{L},\varphi)$ the \emph{algebraic gate} associated to $\rho$ and $\mathbf{L}$ is the \emph{algebraic hull} of $\rho$. The representation $\rho$ is \emph{Zariski dense} if its algebraic hull coincides with $\mathbf{H}$. 

\begin{oss}\label{oss:hull:inessential:contraction}
Let $\kappa$ be a local field of characteristic zero. Given an ergodic groupoid $(\mathcal{G},\lambda,\nu)$ and the $\kappa$-points $H=\mathbf{H}(\kappa)$ of an algebraic $\kappa$-group, we consider a measurable homomorphism $\rho:\mathcal{G} \rightarrow H$. Since the algebraic hull of $\rho$ is defined up to null subsets of both $\mathcal{G}^{(1)}$ and $\mathcal{G}^{(0)}$, it should be clear that passing to an inessential contraction of $\mathcal{G}$ does not modify the algebraic hull of $\rho$. 
\end{oss}

\begin{oss}\label{immagine:kpunti:omogenei}
Let $\kappa$ be a local field of characteristic zero. Given an ergodic groupoid $(\mathcal{G},\lambda,\nu)$ and the $\kappa$-points $H=\mathbf{H}(\kappa)$ of an algebraic $\kappa$-group, we consider a measurable homomorphism $\rho:\mathcal{G}  \rightarrow H$. For any algebraic representation of $\rho$ of coset type $(\mathbf{H}/\mathbf{L},\varphi)$, we claim that the measurable map 
$$
\varphi:\mathcal{G}^{(0)} \rightarrow (\mathbf{H}/\mathbf{L})(\kappa),
$$
has essential image contained in $H/L$, where $L=\mathbf{L}(\kappa)$. By the ergodicity of the groupoid $(\mathcal{G},\lambda,\nu)$ and by the fact that the $H$-orbits on $(\mathbf{H}/\mathbf{L})(\kappa)$ are locally closed, the essential image of $\varphi$ is contained in a single $H$-orbit, say $Hx$ for $x \in (\mathbf{H}/\mathbf{L})(\kappa)$. By \cite[Proposition 2.1]{bader:furman:compositio} the $\kappa$-algebraic map $\mathbf{H}/\mathbf{L} \rightarrow \mathbf{H}x$ restricts to a homeomorphims $H/L \rightarrow Hx$. Hence, up to composing with the inverse of the previous homeomorphism, we can suppose that the codomain of $\varphi$ is given by $H/L$.
\end{oss}

The following result allows to control the essential image of a morphism looking to a suitable inessential contraction. 
\begin{lemma}\label{lemma:inessential:contraction:hull}
Consider an ergodic groupoid $(\mathcal{G},\lambda,\nu)$ and let $H$ be a locally compact second countable group. Given a measurable homomorphism $\rho:\mathcal{G} \rightarrow H$ with $\rho(g) \in L$ for almost every $g \in \mathcal{G}^{(1)}$, there exists a conull subset $U \subset \mathcal{G}^{(0)}$ such that 
$$
\rho\left(\mathcal{G}|_U^{(1)}\right) \subseteq L. 
$$
\end{lemma}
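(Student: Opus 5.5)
Let $B:=\{g \in \mathcal{G}^{(1)} \mid \rho(g)\notin L\}$. The hypothesis says $B$ is null for $\nu\circ\lambda$ (implicitly $L$ is a closed subgroup, so $B$ is Borel). We want a conull $U\subset\mathcal{G}^{(0)}$ such that no morphism with source and target in $U$ lies in $B$. The plan is to exploit the fact that $L$ is a subgroup: the "bad" morphisms should then be controlled by a set of bad units, and the "good" morphisms will be closed under composition.

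\textbf{Step 1: bad units.} Define the bad units
$$
N:=\{x\in\mathcal{G}^{(0)} \mid \lambda^x(B)>0\}.
$$
This set is Borel because $x\mapsto\lambda^x(\mathbf 1_B)$ is Borel. Since
$$
(\nu\circ\lambda)(B)=\int\lambda^x(B)\,d\nu(x)=0,
$$
we get $\nu(N)=0$. Put $U_0:=\mathcal{G}^{(0)}\setminus N$. Then for every $x\in U_0$, $\lambda^x$-almost every $g\in\mathcal{G}^x$ satisfies $\rho(g)\in L$.

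\textbf{Step 2: one good arrow controls the whole fibre.} Fix $y\in U_0$ and $g\in\mathcal{G}^y$, so $s(g)=x$. Then $\mathcal{G}^x=g^{-1}\mathcal{G}^y$, and by invariance of the Haar system $\lambda^{x}=(g^{-1})_*\lambda^{y}$. For $\lambda^x$-a.e. $h\in\mathcal{G}^x$ we can write $h=g^{-1}k$ with $k$ ranging over a $\lambda^y$-conull set, and then
$$
\rho(h)=\rho(g)^{-1}\rho(k)\in\rho(g)^{-1}L .
$$
(On an inessential contraction $\rho$ is a strict homomorphism, so we first replace $\mathcal{G}$ by such a contraction.) If moreover $x\in U_0$, then $\lambda^x$-a.e. $h$ also has $\rho(h)\in L$. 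Provided $\lambda^x\neq0$, the two conull sets meet, so $\rho(g)^{-1}L\cap L\neq\emptyset$, which forces $\rho(g)\in L$. This holds for every $g$ with $s(g),t(g)\in U_0$. Hence $U:=U_0$ minus the null set of units with $\lambda^x=0$ works.

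\textbf{Main obstacle.} The delicate point is ensuring that $\lambda^x$ is nonzero and that the conull sets really intersect for every such $x$, not just almost every one. For the $t$-discrete groupoids of interest, counting measure always has $1_x\in\mathcal{G}^x$ with positive mass, so $\lambda^x\neq0$ automatically. There one can even argue directly: with $U_0$ as above, every $g\in\mathcal{G}^y$ with $y\in U_0$ already satisfies $\rho(g)\in L$, since single points carry positive mass. In general, the measure-theoretic definitions (Haar system, strict homomorphism on a contraction) must be intersected with $U_0$ so that everything holds pointwise on $U$. I expect this bookkeeping to be the only real difficulty.
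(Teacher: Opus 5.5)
Your argument is correct, but it is organised differently from the paper's.

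The paper's route is a citation. It symmetrises $\Sigma=\rho^{-1}(L)$, notes that $\Sigma$ is closed under composition because $L$ is a subgroup, and invokes Ramsay's Lemma~5.2: a conull, symmetric, multiplicative set of arrows contains some inessential contraction $\mathcal{G}|_U$.

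Your route is a direct proof from the Haar system. You discard the null set of units $x$ with $\lambda^x(B)>0$. Then left invariance $\lambda^{s(g)}=(g^{-1})_*\lambda^{t(g)}$ lets you pick $h$ with $\rho(h),\rho(gh)\in L$, so $\rho(g)=\rho(gh)\rho(h)^{-1}\in L$. In effect you reprove Ramsay's lemma for this particular multiplicative set.

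What each buys:
\begin{itemize}
\item The paper's reduction is shorter and works for any conull multiplicative set, but it imports Ramsay's measure-class formalism.
\item Yours stays inside the Haar-system language of Section~\ref{secgroupoids}. It also exposes the hypothesis that $\lambda^x\neq0$ for $\nu$-a.e.\ $x$. The paper's definition of a Haar system does not imply this: for the pair groupoid on $[0,1]$ with $\lambda\equiv0$, $\rho(x,y)=x-y$ and $L=\{0\}$, the lemma fails. The hypothesis does hold for counting measures and for $G\ltimes X$, and Ramsay's framework builds it in.
\item Your $t$-discrete remark is a genuine gain: there $U=\mathcal{G}^{(0)}\setminus t(B)$ works, with no need for strictness or for $L$ to be a subgroup. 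The general argument is still required, since the lemma is applied to $G\ltimes X$ in the proof of Proposition~\ref{prop:zariski:dense:similarity:invariant}.
\end{itemize}

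The bookkeeping you flag is routine. On the contraction $\mathcal{G}|_V$ where $\rho$ is strict, $\lambda^x|_{s^{-1}(V)}$ is again a left-invariant Haar system. By quasi-invariance, $\lambda^x(s^{-1}(\mathcal{G}^{(0)}\setminus V))=0$ for $\nu$-a.e.\ $x$. Once $\lambda^x\neq0$, two $\lambda^x$-conull sets always meet, so there is no ``every versus almost every'' issue.
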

\begin{proof}
By assumption, the set
$$
\Sigma:=\{ g \in \mathcal{G}^{(1)}  \ | \ \widehat{\rho}(g) \in L\} 
$$
has full measure. Up to intersecting $\Sigma$ with 
$$
\Sigma^{-1}:=\{ g \in \mathcal{G}^{(1)}  \ | \ g^{-1} \in \Sigma \},
$$
we can suppose that $\Sigma$ is a symmetric subset of full measure. To obtain the existence of an inessential contraction contained in $\Sigma$, by \cite[Lemma 5.2]{ramsay} it is sufficient to show that $\Sigma$ is a \emph{multiplicative} set, namely it is closed under multiplication. The latter fact is obvious, being $\Sigma$ the preimage of the subgroup $L$. This concludes the proof. 
\end{proof}

 \section{Zariski dense representations}\label{zariski:dense:rep}

In this section we will focus our attention on Zariski dense representations of an ergodic groupoid $(\mathcal{G},\lambda,\mu)$. We are going to prove that Zariski density is invariant for similar representations. Additionally, similar ergodic groupoids have the same set of similarity classes of Zariski dense representations. 

We begin with the following characterization of Zariski density.
\begin{lemma}\label{algebraic:hull:zariski:dense}
Let $\kappa$ be a local field of characteristic zero. Consider an ergodic groupoid $(\mathcal{G},\lambda,\nu)$ and let $H=\mathbf{H}(\kappa)$ be the $\kappa$-points of an algebraic $\kappa$-group $\mathbf{H}$.  Fix a measurable homomorphism $\rho:\mathcal{G}  \rightarrow H$. Then $\rho$ is Zariski dense if and only if there is no proper $\kappa$-algebraic subgroup $\mathbf{L}<\mathbf{H}$ such that there exists a similar homomorphism $\widehat{\rho}:\mathcal{G} \rightarrow H$ with essential image contained in $L=\mathbf{L}(\kappa)$. 
\end{lemma}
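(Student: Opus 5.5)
We prove both directions by contraposition, passing between equivariant maps into $H/L$ and similar homomorphisms landing in $L$.

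\emph{First direction.} Suppose $\rho$ is not Zariski dense, and let $(\mathbf{L},\varphi)$ be its algebraic gate, with $\mathbf{L}<\mathbf{H}$ proper. By Remark \ref{immagine:kpunti:omogenei} we may take $\varphi:\mathcal{G}^{(0)}\to H/L$ with $L=\mathbf{L}(\kappa)$, and $\varphi$ is $\rho$-equivariant: $\varphi(t(g))=\rho(g)\varphi(s(g))$ for almost every $g$. The projection $H\to H/L$ admits a Borel section $\tau:H/L\to H$, since $H$ is locally compact second countable and $L$ is closed. Set $h:=\tau\circ\varphi:\mathcal{G}^{(0)}\to H$ and define
$$\widehat{\rho}(g):=h(t(g))^{-1}\rho(g)h(s(g)).$$
This is a measurable homomorphism similar to $\rho$. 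Equivariance gives $\rho(g)h(s(g))L=h(t(g))L$, so $\widehat{\rho}(g)\in L$ almost everywhere. Hence the essential image of $\widehat{\rho}$ lies in $L$. One technical point is that equivariance holds only almost everywhere, whereas similarity in Definition \ref{def:homo} is required on an inessential contraction. To handle this, we first restrict to an inessential contraction on which $\varphi$ is strictly equivariant. Such a contraction exists by the argument of Lemma \ref{lemma:inessential:contraction:hull} applied to the multiplicative set of $g$ where the equivariance identity holds.

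\emph{Second direction.} Suppose $\widehat{\rho}\sim\rho$ has essential image in $L=\mathbf{L}(\kappa)$ with $\mathbf{L}$ proper. By Lemma \ref{lemma:inessential:contraction:hull}, there is a conull $U$ with $\widehat{\rho}(\mathcal{G}|_U^{(1)})\subseteq L$. Shrinking $U$ further, we may assume $\rho$ and $\widehat{\rho}$ are strictly similar on $\mathcal{G}|_U$ via some Borel $h:U\to H$ with $\rho(g)=h(t(g))\widehat{\rho}(g)h(s(g))^{-1}$. Define $\varphi(x):=h(x)L\in H/L\subset(\mathbf{H}/\mathbf{L})(\kappa)$. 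Then
$$\rho(g)\varphi(s(g))=h(t(g))\widehat{\rho}(g)L=h(t(g))L=\varphi(t(g)),$$
so $(\mathbf{H}/\mathbf{L},\varphi)$ is an algebraic representation relative to $\rho$, defined on a conull set. By Remark \ref{oss:hull:inessential:contraction}, this suffices. Since the algebraic gate $(\mathbf{H}/\mathbf{L}_0,\varphi_0)$ is initial, there is an $\mathbf{H}$-equivariant $\kappa$-map $\mathbf{H}/\mathbf{L}_0\to\mathbf{H}/\mathbf{L}$. Such a map sends the base point to some $a\mathbf{L}$, so $\mathbf{L}_0\subset a\mathbf{L}a^{-1}$. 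Therefore $\mathbf{L}_0$ is proper and $\rho$ is not Zariski dense.

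\emph{Main obstacle.} The delicate step is the measure-theoretic bookkeeping. We must check that $h$ is Borel, which requires a Borel section of $H\to H/L$ and measurability of $\varphi$. We must also check that $\widehat{\rho}$ is a genuine homomorphism in Ramsay's sense, i.e.\ strict on an inessential contraction. For the latter we intersect the finitely many conull multiplicative sets involved and apply \cite[Lemma 5.2]{ramsay}.
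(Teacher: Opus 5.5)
Your proof is correct and follows the same route as the paper. In one direction you lift the equivariant gate map through a Borel section $H/L\to H$ and conjugate $\rho$ by it to get a similar homomorphism with values in $L$; in the other you push the similarity $h$ down to $x\mapsto h(x)L$ to get a $\rho$-equivariant map into $\mathbf{H}/\mathbf{L}$, and initiality then forces the hull into a conjugate of $\mathbf{L}$. You are more explicit than the paper, which leaves the initiality step and the almost-everywhere bookkeeping implicit. Your extra restriction to a contraction where $\varphi$ is strictly equivariant is harmless but unnecessary: $\widehat{\rho}$ is similar to $\rho$ by construction, and $\widehat{\rho}(g)\in L$ for almost every $g$ already puts its essential image in $L$.
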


\begin{proof}
We start assuming that $\rho$ is Zariski dense. By contradiction, suppose that there exists a proper $k$-algebraic subgroup $\mathbf{L}<\mathbf{H}$ so that $\rho$ is similar to a homomorphism $\widehat{\rho}:\mathcal{G} \rightarrow H$ with essential image in $L=\mathbf{L}(\kappa)$. By Lemma \ref{lemma:inessential:contraction:hull}, we can suppose that $\widehat{\rho}$ sends an inessential contraction of $\mathcal{G}$ entirely in $L$. Thus, up to passing to an inessential contraction, we suppose that there exists a measurable map 
$$
\theta:\mathcal{G}^{(0)} \rightarrow H,
$$
such that 
$$
\theta(t(g))\widehat{\rho}(g)=\rho(g)\theta(s(g))
$$
and $\widehat{\rho}(g) \in L$, for all $g \in \mathcal{G}^{(1)} $. By composing $\theta$ with the quotient projection $H\rightarrow H/L$ and the inclusion $H/L \rightarrow (\mathbf{H}/\mathbf{L})(\kappa)$ we obtain a measurable map 
$$
\overline{\theta}:\mathcal{G}^{(0)} \rightarrow (\mathbf{H}/\mathbf{L})(\kappa)
$$
which is $\rho$-equivariant. This contradicts the Zariski density of $\rho$. 

Assume now that there is no $\mathbf{L}$ as in the statement of the lemma. By contradiction, suppose that $\rho$ is not Zariski dense. Then there exists a proper $\kappa$-algebraic subgroup $\mathbf{M}<\mathbf{H}$ and a $\rho$-equivariant measurable map 
$$
\varphi:\mathcal{G}^{(0)} \rightarrow (\mathbf{H}/\mathbf{M})(\kappa).
$$
Thanks to Remark \ref{immagine:kpunti:omogenei}, we can suppose that $\varphi$ takes values in $H/M$, namely
$$
\varphi:\mathcal{G}^{(0)} \rightarrow H/M,
$$
where $M=\mathbf{M}(\kappa)$. By \cite[Corollary A.8]{zimmer:libro}, we can compose $\varphi$ with a measurable section $s:H/M \rightarrow H$ to obtain a measurable map 
$$
\theta:\mathcal{G}^{(0)} \rightarrow H.
$$
If we define 
$$
\widehat{\rho}:\mathcal{G}  \rightarrow H, \ \ \ \widehat{\rho}(g):=\theta(t(g))^{-1}\rho(g)\theta(s(g)),
$$
then $\theta$ is a similarity between $\rho$ and $\widehat{\rho}$. Moreover $\widehat{\rho}$ has essential image contained in $M$. This contradicts our initial assumption and concludes the proof. 
\end{proof}

The previous result shows that, for Zariski dense homomorphisms, one cannot find in the same similarity class a homomorphism whose essential image is contained in a proper subgroup.

We move on by proving that the conjugacy class of the algebraic hull is an invariant for similarity.

\begin{prop}\label{prop_algebraic_hull_invariant}
Let $\kappa$ be a local field of characteristic zero. Let $(\mathcal{G},\lambda,\nu)$ be an ergodic groupoid and let $H=\mathbf{H}(\kappa)$ be the $\kappa$-points of an algebraic $\kappa$-group $\mathbf{H}$. Consider two measurable homomorphisms
$$
\rho_1,\rho_2:\mathcal{G} \rightarrow H. 
$$
Let $\mathbf{L}_i < \mathbf{H}$ be the algebraic hull of $\rho_i$, for $i=1,2$. If $\rho_1$ and $\rho_2$ are similar, then $\mathbf{L}_1$ and $\mathbf{L}_2$ are conjugate. 
\end{prop}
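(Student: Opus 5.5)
The plan is to transport algebraic representations along the similarity and then use the initial-object property of algebraic gates in both directions. By Remark \ref{oss:hull:inessential:contraction}, passing to an inessential contraction changes neither hull. So we may restrict to a conull $V \subset \mathcal{G}^{(0)}$ on which $\rho_1$ and $\rho_2$ are strictly similar via a Borel map $h:V \rightarrow H$:
$$
\rho_2(g)=h(t(g))\rho_1(g)h(s(g))^{-1}.
$$
We then work on $\mathcal{G}|_V$ throughout.

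Let $(\mathbf{H}/\mathbf{L}_1,\varphi_1)$ be the algebraic gate of $\rho_1$. By Remark \ref{immagine:kpunti:omogenei} we may assume $\varphi_1$ takes values in $H/L_1$. Define $\varphi_1'(x):=h(x)\varphi_1(x)$. A direct computation shows that $\varphi_1'$ is $\rho_2$-equivariant: for $g$ with $s(g)=x$ and $t(g)=y$,
$$
\varphi_1'(y)=h(y)\rho_1(g)\varphi_1(x)=\rho_2(g)h(x)\varphi_1(x)=\rho_2(g)\varphi_1'(x).
$$
Hence $(\mathbf{H}/\mathbf{L}_1,\varphi_1')$ is an algebraic representation relative to $\rho_2$. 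More generally, $(\mathbf{V},\varphi)\mapsto(\mathbf{V},h\varphi)$ is a functor from representations relative to $\rho_1$ to those relative to $\rho_2$. Its inverse is $\varphi\mapsto h^{-1}\varphi$. It preserves morphisms, because morphisms are $\mathbf{H}$-equivariant and so commute with pointwise multiplication by $h$. This gives an isomorphism of categories, so it sends initial objects to initial objects. By uniqueness of the initial object up to unique isomorphism, we get an $\mathbf{H}$-equivariant $\kappa$-isomorphism $\mathbf{H}/\mathbf{L}_1 \cong \mathbf{H}/\mathbf{L}_2$.

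If the functor argument needs care (e.g.\ measurability, or the representations living only up to null sets), I would instead argue with the two one-sided morphisms. Initiality of $\mathbf{H}/\mathbf{L}_2$ for $\rho_2$ gives an $\mathbf{H}$-map $\mathbf{H}/\mathbf{L}_2 \rightarrow \mathbf{H}/\mathbf{L}_1$ intertwining $\varphi_2$ with $h\varphi_1$. Symmetrically, using $h^{-1}$, we get a map $\mathbf{H}/\mathbf{L}_1 \rightarrow \mathbf{H}/\mathbf{L}_2$. Their composites are endomorphisms of initial objects that fix the gate maps, so they are the identity by uniqueness.

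The final step is to extract conjugacy from the equivariant isomorphism $\Psi:\mathbf{H}/\mathbf{L}_1 \rightarrow \mathbf{H}/\mathbf{L}_2$. Write $\Psi(e\mathbf{L}_1)=a\mathbf{L}_2$. Equivariance gives $\mathbf{L}_1=\mathrm{Stab}(e\mathbf{L}_1)=\mathrm{Stab}(a\mathbf{L}_2)=a\mathbf{L}_2a^{-1}$. This is the main obstacle, since $a$ is a priori only in $\mathbf{H}(\overline{\kappa})$ and the point $a\mathbf{L}_2$ need not be a $\kappa$-point. To handle it, I would use that the essential image of $\varphi_1$ lies in $H/L_1$ (Remark \ref{immagine:kpunti:omogenei}). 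Pick $x$ in that essential image and write $\varphi_1(x)=b L_1$ with $b\in H$. The image $\Psi(bL_1)=\varphi_2'(x)$ is a $\kappa$-point. By Remark \ref{immagine:kpunti:omogenei} applied to $\rho_2$, it lies in $H/L_2$, i.e.\ it equals $cL_2$ with $c \in H$. Then $b\mathbf{L}_1b^{-1}=c\mathbf{L}_2c^{-1}$, so $\mathbf{L}_1=(b^{-1}c)\mathbf{L}_2(b^{-1}c)^{-1}$ with $b^{-1}c\in H$. This gives conjugacy over $\kappa$.
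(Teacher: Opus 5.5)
Your proposal is correct and is essentially the paper's proof: after passing to an inessential contraction on which $\rho_1,\rho_2$ are strictly similar, you transport the gate maps by $h$ and $h^{-1}$ (the paper's $\theta$), and you use initiality of $(\mathbf{H}/\mathbf{L}_2,\varphi_2)$ and $(\mathbf{H}/\mathbf{L}_1,\varphi_1)$ to get $\mathbf{H}$-equivariant $\kappa$-maps in both directions. Your additional steps (showing the composites are identities, and using that the gate maps land in $H/L_i$ to take the conjugating element in $H$) make explicit what the paper compresses into ``the existence of both maps $F$ and $G$ implies that $\mathbf{L}_1$ and $\mathbf{L}_2$ are conjugate''. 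One minor correction: $\Psi(e\mathbf{L}_1)$ is automatically a $\kappa$-point, and the genuine issue is that $(\mathbf{H}/\mathbf{L}_2)(\kappa)$ may be strictly larger than $H/L_2$, which is exactly what your fix addresses.
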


\begin{proof}
By definition, we know that there exist conull subsets $U_1,U_2 \subset \mathcal{G}^{(0)}$ such that 
$$
\rho_1:\mathcal{G}|_{U_1} \rightarrow H, \ \ \ \ \rho_2:\mathcal{G}|_{U_2}  \rightarrow H, 
$$
are strict homomorphisms. Since $\rho_1$ and $\rho_2$ are similar, there must exist a conull subset $U \subset U_1 \cap U_2$ and measurable map 
$$
\theta:\mathcal{G}^{(0)} \rightarrow H, 
$$
such that 
$$
\rho_2(g)=\theta(t(g))\rho_1(g)\theta(s(g))^{-1},
$$
for all $g \in \mathcal{G}|_U^{(1)} $. 

By Remark \ref{oss:hull:inessential:contraction} restricting to an inessential contraction does not modify the algebraic hull. Thus, without loss of generality, we can suppose that $\rho_1$ and $\rho_2$ are strictly similar strict morphisms with algebraic hull $\mathbf{L}_1$ and $\mathbf{L}_2$, respectively. 

If $\mathbf{L}_i$ is the algebraic hull of $\rho_i$, for $i=1,2$, there must exist a measurable map 
$$
\varphi_i:\mathcal{G}^{(0)} \rightarrow  (\mathbf{H}/\mathbf{L_i})(\kappa),
$$
such that 
$$
\varphi_i(t(g))=\rho_i(g)\varphi(s(g)),
$$
for almost every $g \in \mathcal{G}^{(1)} $ and for $i=1,2$. Additionally $(\mathbf{H}/\mathbf{L}_i,\varphi_i)$ is the initial object in the category of algebraic representations, again for $i=1,2$. 

We define
$$
\widehat{\varphi_1}:\mathcal{G}^{(0)} \rightarrow (\mathbf{H}/\mathbf{L}_1)(\kappa), \ \ \ \widehat{\varphi_1}(x):=\theta(x)\varphi_1(x).
$$
It is immediate to verify that $\widehat{\varphi_1}$ is $\rho_2$-equivariant, namely:
\begin{align*}
\widehat{\varphi_1}(t(g))&=\theta(t(g))\varphi_1(t(g))\\
&=\theta(t(g))\rho_1(g)\varphi_1(s(g))\\
&=\rho_2(g)\theta(s(g))\varphi_1(s(g))=\rho_2(g)\widehat{\varphi_1}(s(g)). 
\end{align*}
Since $(\mathbf{H}/\mathbf{L}_2,\varphi_2)$ is an initial object in the category of algebraic representations of $\rho_2$, there must exist a $\mathbf{H}$-equivariant $\kappa$-map 
$$
F:\mathbf{H}/\mathbf{L}_2 \rightarrow \mathbf{H}/\mathbf{L}_1
$$
such that $\widehat{\varphi_1}=F \circ \varphi_2$. 

In a similar way, by setting 
$$
\widehat{\varphi_2}:\mathcal{G}^{(0)} \rightarrow (\mathbf{H}/\mathbf{L}_2)(\kappa), \ \ \ \widehat{\varphi_2}(x):=\theta(x)^{-1}\varphi_2(x),
$$
we obtain a $\rho_1$-equivariant map. Since $(\mathbf{H}/\mathbf{L}_1,\varphi_1)$ is an initial object in the category of algebraic representation of $\rho_1$, there exists a $\mathbf{H}$-equivariant map 
$$
G:\mathbf{H}/\mathbf{L}_1 \rightarrow \mathbf{H}/\mathbf{L}_2
$$
such that $\widehat{\varphi_2}=G \circ \varphi_1$. The existence of both the maps $F$ and $G$, implies that $\mathbf{L}_1$ and $\mathbf{L}_2$ are conjugate. This concludes the proof. 
\end{proof}

An immediate consequence of the previous proposition is the following:

\begin{cor}\label{cor_zariski_dense_invariant}
Let $\kappa$ be a local field of characteristic zero. Let $(\mathcal{G},\lambda,\nu)$ be an ergodic groupoid and let $H=\mathbf{H}(\kappa)$ be the $\kappa$-points of an algebraic $\kappa$-group $\mathbf{H}$. Consider two measurable homomorphisms
$$
\rho_1,\rho_2:\mathcal{G} \rightarrow H. 
$$
Suppose that $\rho_1$ and $\rho_2$ are similar. Then $\rho_1$ is Zariski dense if and only if $\rho_2$ is Zariski dense. 
\end{cor}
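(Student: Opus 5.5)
This follows directly from Proposition \ref{prop_algebraic_hull_invariant}. By definition, $\rho_i$ is Zariski dense exactly when its algebraic hull $\mathbf{L}_i$ equals $\mathbf{H}$. The argument therefore has two ingredients: the hulls $\mathbf{L}_1$ and $\mathbf{L}_2$ are conjugate, and the only subgroup of $\mathbf{H}$ conjugate to $\mathbf{H}$ is $\mathbf{H}$ itself.

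First, I apply Proposition \ref{prop_algebraic_hull_invariant} to the similar homomorphisms $\rho_1,\rho_2$. This gives an element $h$ with $\mathbf{L}_2 = h\mathbf{L}_1h^{-1}$. The element may be taken in $\mathbf{H}$, and over the algebraic closure if needed; this does not affect the argument.

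Suppose $\rho_1$ is Zariski dense, so $\mathbf{L}_1=\mathbf{H}$. Then $\mathbf{L}_2=h\mathbf{H}h^{-1}=\mathbf{H}$, hence $\rho_2$ is Zariski dense. Since similarity is symmetric, the same reasoning with the roles of $\rho_1$ and $\rho_2$ exchanged gives the converse.

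There is no real obstacle here; the only point to watch is that "conjugate" is meant in $\mathbf{H}$, so conjugation fixes $\mathbf{H}$. A second route avoids the proposition's conjugacy statement altogether and uses Lemma \ref{algebraic:hull:zariski:dense}. Because similarity is an equivalence relation (transitivity holds after intersecting the conull sets of the two inessential contractions), $\rho_1$ and $\rho_2$ have the same similarity class. The criterion of Lemma \ref{algebraic:hull:zariski:dense} depends only on that class: it asks whether some member of the class has essential image in a proper $L$. So $\rho_1$ and $\rho_2$ are either both Zariski dense or both not.
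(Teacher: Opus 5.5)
Your proof is correct and follows the paper's own approach: the paper obtains this corollary as an immediate consequence of Proposition \ref{prop_algebraic_hull_invariant}, since the two algebraic hulls are conjugate and $\mathbf{H}$ is the only subgroup conjugate to $\mathbf{H}$. Your second route, which uses Lemma \ref{algebraic:hull:zariski:dense} together with the fact that similarity is an equivalence relation, is also valid.
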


The fact that Zariski density is invariant under similarity allows to consider the set of classes of Zariski dense homomorphisms from a given ergodic measured groupoid into an algebraic group. 
\begin{defn}\label{def:zariski:dense:classes}
Consider a local field $\kappa$ of characteristic zero. Let $(\mathcal{G},\lambda,\nu)$ be an ergodic groupoid and let $H=\mathbf{H}(\kappa)$ be the $\kappa$-points of an algebraic group $\mathbf{H}$. We denote by 
$$
\mathrm{Hom}_{ZD}[\mathcal{G},H]:=\{ [\rho] \ | \ \rho:\mathcal{G} \rightarrow H \ \textup{is Zariski dense} \}
$$
the set of similarity classes of Zariski dense homomorphisms. 
\end{defn}

Given similar ergodic groupoids, our next goal is to compare their classes of Zariski dense homomorphisms into a given algebraic group. In fact, any similarity descends to a bijection on $\mathrm{Hom}_{ZD}$.
\begin{prop}\label{prop:zariski:dense:similarity:invariant}
Let $(\mathcal{G}_1,\lambda_1,\nu_1)$ and $(\mathcal{G}_2,\lambda_2,\nu_2)$ be two ergodic groupoids. Given a local field $\kappa$ of characteristic zero, consider $H=\mathbf{H}(\kappa)$ the $\kappa$-points of an algebraic $\kappa$-group $\mathbf{H}$. If $\psi:\mathcal{G}_1 \rightarrow \mathcal{G}_2$ is a similarity, then the bijection
$$
\Psi^*:\mathrm{Hom}[\mathcal{G}_2,H] \rightarrow \mathrm{Hom}[\mathcal{G}_1,H]
$$
restricts to a well-defined bijection on Zariski dense classes, namely
$$
\Psi^*:\mathrm{Hom}_{ZD}[\mathcal{G}_2,H] \rightarrow \mathrm{Hom}_{ZD}[\mathcal{G}_1,H]. 
$$
\end{prop}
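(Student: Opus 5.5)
The plan is first to pin down the map $\Psi^*$ and check that it is a bijection on all similarity classes, and then to show that it preserves Zariski density in both directions.

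Let $\phi:\mathcal{G}_2 \rightarrow \mathcal{G}_1$ be a homomorphism with $\phi\circ\psi \sim \id_{\mathcal{G}_1}$ and $\psi\circ\phi\sim \id_{\mathcal{G}_2}$. Set $\Psi^*[\rho]:=[\rho\circ\psi]$ and $\Phi^*[\rho']:=[\rho'\circ\phi]$. Well-definedness is a direct computation. If $\rho\approx\rho'$ via $h:\mathcal{G}_2^{(0)}\to H$ on an inessential contraction $\mathcal{G}_2|_V$, then $h\circ\psi_0$ is a similarity between $\rho\circ\psi$ and $\rho'\circ\psi$. This holds on $\mathcal{G}_1|_{U}$ with $U=\psi_0^{-1}(V)\cap(\text{domain of strictness})$. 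The set $U$ is conull because $\mathcal{G}_2^{(0)}\setminus V$ is negligible and $\psi$ is a homomorphism, so its preimage is $\nu_1$-null. Here I would use Definition \ref{def:homo} exactly as stated.

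To see that $\Phi^*\Psi^*=\id$, take a similarity $k:\mathcal{G}_1^{(0)}\to\mathcal{G}_1^{(1)}$ between $\phi\circ\psi$ and $\id$. Then $\rho\circ k$ is a similarity between $\rho\circ\phi\circ\psi$ and $\rho$, since
$$\rho(\phi\psi(g))=\rho(k(t(g)))\rho(g)\rho(k(s(g)))^{-1}.$$
The identity $\Psi^*\Phi^*=\id$ is symmetric. This gives the bijection on $\mathrm{Hom}[\cdot,H]$. By Corollary \ref{cor_zariski_dense_invariant}, Zariski density is a property of similarity classes, so it remains to show that $\rho$ Zariski dense implies $\rho\circ\psi$ Zariski dense, and symmetrically for $\phi$.

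For this step I would use Lemma \ref{algebraic:hull:zariski:dense}. Suppose $\rho\circ\psi$ is not Zariski dense. Then there is a proper $\kappa$-subgroup $\mathbf{L}$ and a homomorphism $\widehat\rho\sim\rho\circ\psi$ with essential image in $L$. By Lemma \ref{lemma:inessential:contraction:hull}, after passing to an inessential contraction, $\widehat\rho$ takes values in $L$ everywhere. By the functoriality just proved, $\widehat\rho\circ\phi\sim\rho\circ\psi\circ\phi\sim\rho$. Moreover $\widehat\rho\circ\phi$ takes values in $L$ on the inessential contraction $\phi_0^{-1}(\cdot)$, so its essential image lies in $L$. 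This contradicts Lemma \ref{algebraic:hull:zariski:dense} applied to $\rho$.

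Alternatively, and more directly, I would pull back a $\rho\circ\psi$-equivariant map $\varphi:\mathcal{G}_1^{(0)}\to(\mathbf{H}/\mathbf{L})(\kappa)$ to $\varphi':=\rho(k(\cdot))^{-1}\cdot\varphi\circ\phi_0$, which is $\rho$-equivariant. The same argument in the other direction, using $\phi$, gives surjectivity onto Zariski dense classes.

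The main obstacle is measure-theoretic bookkeeping rather than algebra. One must ensure that the composed maps are honest homomorphisms, with preimages of negligible sets null, and that "essential image in $L$" survives composition with $\phi$. The second point is why I first pass to an inessential contraction where the image lies in $L$ pointwise, via Lemma \ref{lemma:inessential:contraction:hull}. One must also check that the preimage under $\phi_0$ of a conull set is conull; this holds because its complement is negligible, hence has null preimage, once we restrict to an invariant conull set. Ergodicity of both groupoids is needed only so that algebraic hulls and Lemma \ref{algebraic:hull:zariski:dense} are available.
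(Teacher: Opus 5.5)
Your strategy is the paper's: argue by contradiction via Lemma \ref{algebraic:hull:zariski:dense}, use Lemma \ref{lemma:inessential:contraction:hull} to get $\widehat\rho(\mathcal{G}_1|_U^{(1)})\subseteq L$ for a conull $U\subset\mathcal{G}_1^{(0)}$, then compose with the inverse similarity $\phi$. The gap is in transporting this along $\phi$.

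You assert that $\phi_0^{-1}(U)$ is conull because $\mathcal{G}_1^{(0)}\setminus U$ is negligible. That is false in general. A null set is negligible only if its \emph{saturation} is null (Definition \ref{def:negligible}), and for non-discrete groupoids this can fail. You also cannot shrink $U$ to an invariant conull set: the largest invariant subset of $U$ is the complement of $\mathcal{G}_1(\mathcal{G}_1^{(0)}\setminus U)$, which may be empty.

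This breaks exactly in the case the paper needs. Take $\mathcal{G}_1=G\ltimes X$, $\mathcal{G}_2=\mathcal{G}$, $\phi=\iota$, with $G$ non-discrete. Then the cross section $Y$ is $\mu$-null, by Fubini, since each $Y_x$ is countable and Haar measure has no atoms. So $U=X\setminus Y$ is an admissible conull set with $\iota_0^{-1}(U)=\emptyset$, and your restriction of $\widehat\rho\circ\phi$ to $\phi_0^{-1}(U)$ carries no information.

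The same false claim underlies three other steps:
\begin{itemize}
\item your well-definedness of $\Psi^*$;
\item the step $\widehat\rho\circ\phi\sim\rho\circ\psi\circ\phi$;
\item your alternative pull-back of the equivariant map, since equivariance for almost every morphism of $\mathcal{G}_1$ says nothing on $\phi(\mathcal{G}_2^{(1)})$, which can be a null set.
\end{itemize}
Your shortcut is harmless only when the groupoid receiving $\phi$ is $t$-discrete with quasi-invariant measure, where null sets do have null saturation. The direction used in Theorem \ref{thm:superrigidity} is precisely the one where $\phi=\iota$ lands in $G\ltimes X$.

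The missing ingredient is Lemma 6.6 of \cite{ramsay}, which is what the paper invokes. Pull back the saturation $\mathcal{G}_1U$ rather than $U$. It is conull and \emph{invariant}, so its complement is genuinely negligible and $\phi_0^{-1}(\mathcal{G}_1U)$ is conull. On that set choose, measurably, $k(y)\in\mathcal{G}_1^{(1)}$ with $s(k(y))=\phi_0(y)$ and $t(k(y))\in U$, and set $\sigma(g):=k(t(g))\phi(g)k(s(g))^{-1}$. Then $\sigma\sim\phi$ and $\sigma$ takes values in $\mathcal{G}_1|_U^{(1)}$. Hence $\widehat\rho\circ\sigma$ takes values in $L$ everywhere, and $\widehat\rho\circ\sigma\sim\widehat\rho\circ\phi\sim\rho\circ\psi\circ\phi\sim\rho$, contradicting Lemma \ref{algebraic:hull:zariski:dense}. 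With this replacement, and the same device wherever you compose homomorphisms or similarities, your argument goes through.
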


\begin{proof}
Consider a Zariski dense homomorphism $\rho:\mathcal{G}_2 \rightarrow H$ and take the composition $\rho \circ \psi:\mathcal{G}_1 \rightarrow H$. By contradiction, assume that the latter is not Zariski dense.  By Lemma \ref{algebraic:hull:zariski:dense} there must exist a $\kappa$-algebraic subgroup $\mathbf{L}<\mathbf{H}$ and a representation $\alpha:\mathcal{G}_1 \rightarrow H$ similar to $\rho \circ \psi$ whose essential image is contained in $L=\mathbf{L}(\kappa)$. By Lemma \ref{lemma:inessential:contraction:hull} we can suppose that there exists a conull subset $U \subseteq \mathcal{G}_1^{(0)}$ such that
$$
\alpha\left(\mathcal{G}_1|_U^{(1)}\right) \subseteq L. 
$$
Let $\varphi:\mathcal{G}_2 \rightarrow \mathcal{G}_1$ be the homomorphism such that $\varphi \circ \psi$ is similar to $\mathrm{id}_{\mathcal{G}_1}$ and $\psi \circ \varphi$ is similar to $\mathrm{id}_{\mathcal{G}_2}$. Since $U$ is conull, a fortiori its $\mathcal{G}_1$-saturation is conull. By \cite[Lemma 6.6]{ramsay} we can find a homomorphism $\sigma:\mathcal{G}_2 \rightarrow \mathcal{G}_1$ similar to $\varphi$ and such that
$$
\sigma\left(\mathcal{G}_2^{(1)}\right) \subset \mathcal{G}_1|_U^{(1)}. 
$$
As a consequence, the composition $\alpha \circ \sigma$ satisfies
$$
(\alpha \circ \sigma)\left(\mathcal{G}_2^{(1)}\right) \subset L.
$$
Moreover $\alpha \circ \sigma$ is similar to $\rho$ since we have that
$$
\alpha \circ \sigma \sim \alpha \circ \varphi \sim \rho \circ \psi \circ \varphi \sim \rho. 
$$
In this way we find a homomorphism $\alpha \circ \sigma$ similar to $\rho$ whose image is contained in $L$. By Lemma \ref{algebraic:hull:zariski:dense} this contradicts the Zariski density of $\rho$. 

We have shown that 
$$
\Psi^\ast \left(\mathrm{Hom}_{ZD}[\mathcal{G}_2,H] \right) \subset \mathrm{Hom}_{ZD}[\mathcal{G}_1,H].
$$
To show that $\Psi^\ast$ is precisely a bijection on Zariski dense classes, it is sufficient to apply the same argument we followed above to the map $\Phi^\ast$ induced by $\varphi$. This concludes the proof. 
\end{proof}

The invariance of Zariski dense homomorphisms under similarity can be combined with the fact that any transverse groupoid is similar to the ambient action groupoid (\ref{lemma:sim}). 
\begin{proof}[Proof of Theorem \ref{thm:superrigidity}]
We have a similarity between $G \ltimes X$ and $\mathcal{G}$, where the latter has its finite transverse measure. Let $\psi:G \ltimes X \rightarrow \mathcal{G}$ be a homomorphism realizing such a similarity (the other is $\iota:\mathcal{G} \rightarrow G \ltimes X$). Proposition \ref{prop:zariski:dense:similarity:invariant} guarantees that the map
$$
\Psi:\mathrm{Hom}_{ZD}[\mathcal{G},H] \rightarrow \mathrm{Hom}_{ZD}[G \ltimes X, H]
$$
is a bijection. Then
$$
\rho \circ \psi: G \ltimes X \rightarrow H
$$
is a Zariski dense homomorphism, namely a Zariski dense measurable cocycle. As a consequence, we are in the right position to apply the superrigidity theorem for cocycles \cite[Theorem 10.1.6]{zimmer:libro}. We are going to distinguish the three cases of \cite[Theorem 10.1.6]{zimmer:libro} separately:

\begin{enumerate}
\item Assume that $\rho \circ \psi$ is similar to a cocycle $\sigma:G \ltimes X \rightarrow H$ with values into a compact subgroup of $H$. Then $\sigma \circ \iota$ is a morphism defined on $\mathcal{G}$ whose image lies into a compact subgroup of $H$ and is similar to $\rho$. 

\item Suppose $\kappa=\mathbb{Q}_{p_i}$, for some $p_i \in S \setminus \{\infty\}$. Then $\rho \circ \psi$ is similar to a cocycle induced by the following composition 
$$
\pi:G \rightarrow G_i \rightarrow H,
$$
where the first map is the projection on the $i$-th factor and the second one is a rational epimorphism. As a consequence, $\rho$ is similar to a representation induced by $\pi$. 

\item For $\kappa=\mathbb{R},\mathbb{C}$ the proof is analogous to the argument of the previous point. 
\end{enumerate}
This concludes the proof. 
\end{proof}

The same strategy applies to the case when the codomain is reductive.
\begin{proof}[Proof of Theorem \ref{thm:margulis:fisher}]
As before, we have a similarity between $G \ltimes X$ and the restricted groupoid $\mathcal{G}$. We denote by $\psi:G \ltimes X \rightarrow \mathcal{G}$ and by $\iota:\mathcal{G} \rightarrow G \ltimes X$ the two morphisms realizing the similarity. By Proposition \ref{prop:zariski:dense:similarity:invariant} we have a bijection
$$
\Psi:\mathrm{Hom}_{ZD}[\mathcal{G},H] \rightarrow \mathrm{Hom}_{ZD}[G \ltimes X, H].
$$
As a consequence
$$
\rho \circ \psi: G \ltimes X \rightarrow H
$$
is a Zariski dense measurable cocycle. We can apply Margulis-Fisher rigidity statement \cite[Theorem 3.16]{margulis:fisher} to obtain a representation $\pi:G \rightarrow H$ and a measurable cocycle $\sigma:G \ltimes X \rightarrow K \subset H$, where $K$ is a compact group centralizing $\pi(G)$, such that $\rho \circ \psi$ is similar to the product $\pi \cdot \sigma$. We have that $\rho$ is similar to $(\pi \cdot \sigma) \circ \iota$, as claimed. 
\end{proof}

\bibliographystyle{amsalpha}
\bibliography{biblionote}

\end{document}